\newtheorem{lemma}{Лемма}
\newtheorem{teor}{Теорема}
\newtheorem{Prop}{Предложение}
\newtheorem{df}{Определение}
\def\Z{{\mathbb Z}}
\title{Изопериодическое слоение на пространствах модулей вещественно-нормированных мероморфных дифференциалов с одним полюсом}
\author{М.~Ненашева\thanks{Сколковский Институт Науки и Технологий, Национальный Исследовательский Институт Высшая Школа Экономики Работа поддержана грантом РНФ 23-11-00150 ``Математические проблемы современной математической физики''}}
\date{\today}
\begin{document}
\maketitle

\section{Введение}

\subsection{Пространства модулей мероморфных дифференциалов}
%1
Всякий ненулевой голоморфный абелев дифференциал на римановой поверхности определяет на ней
плоскую метрику всюду кроме конечного числа точек. %голономия метрики
Исключительные (проколотые) точки это нули абелева дифференциала, в них метрика имеет конические особенности.
%2-зачем их вводить?
    %\emph{плоской поверхностью}, или \emph{поверхностью переноса}.
%пространства модулей
Пусть $(X,\psi)$ и $(X',\psi')$ --- две римановы поверхности с заданными на них голоморфными дифференциалами.  Если биголоморфизм $f:X\rightarrow X'$ такой, что $f^*\psi'=\psi$, то $f$ является изометрией для метрик, определенных $1$-формами  $\psi,\psi'$. В локальных координатах, заданных
$1$-формами $\psi,\psi'$, отображение $f$ является переносом.

Рассмотрим пространство, точками которого являются классы эквивалентности пар (риманова поверхность фиксированного рода $g$, голоморфный дифференциал на ней), $(X,\psi)\sim (X', \psi')$, если существует биголоморфизм $f:X\rightarrow X'$, такой, что $f^*\psi'=\psi$.  Такое пространство называется пространством модулей голоморфных дифференциалов рода $g$ и обозначается $\mathcal{H}_g$. Оно наделено естественной структурой
комплексного орбифолда, см., например,~\cite{qd}.

Задачи, относящиеся к геометрии пространств $\mathcal{H}_g$, возникают в тейхмюллеровой геометрии, геометрии бильярдов на многоугольниках,
 при изучении перекладываний отрезков и многих других областях.

Голоморфный дифференциал на компактной римановой поверхности рода~$g$ имеет $2g-2$ нулей с учетом кратностей. Разбиения числа $2g-2$ определяют стратификацию пространства $\mathcal{H}_g$: страт $\mathcal{H}(\kappa)\subset \mathcal{H}_g$, заданный разбиением $\kappa\vdash 2g-2$, $\kappa=
(k_1, \dots, k_n)$, $k_i \in \mathbb{N}$, $k_1+\dots+k_n=2g-2$, состоит из точек, для которых соответствующие голоморфные дифференциалы имеют нули кратностей $k_1, \dots, k_n$. %\mathbb{Z}^+$.
Известно, что замыкание всякого страта $\mathcal{H}(\kappa)$ в $H_g$ является алгебраическим многообразием и комплексным орбифолдом  размерности $2g + n-1$, см., например,~\cite{qd}.

Страт $\mathcal{H}(\kappa)$ может оказаться несвязным, что было впервые отмечено У. Вичем~\cite{qd}.  Полное описание компонент связности произвольных стратов $\mathcal{H}(\kappa)$ дано в работе Концевича и Зорича \cite{KZ}. В частности, известно, что число компонент связности не превышает трех. Страты
$\mathcal{H}(\kappa)$ не компактны, поскольку не компактно пространство модулей гладких кривых данного рода.

Пара (риманова поверхность, мероморфный дифференциал на ней) называется \emph{плоской поверхностью с полюсами}.  Аналогично ситуации с голоморфными дифференциалами, пространства модулей мероморфных дифференциалов с полюсами фиксированных порядков допускают стратификацию по наборам кратностей нулей. Мы будем обозначать подмногообразие  мероморфных дифференциалов с $r$ нулями кратностей $n_1,\dots,n_r$ в пространстве дифференциалов с $k$ полюсами данных фиксированных порядков $m_1,\dots, m_k$ через $\mathcal{H}(n_1,\dots,n_r;-m_1,\dots -m_k)$.
Как следует из теоремы 1.1 в \cite{Bo}, в роде один существуют пространства мероморфных дифференциалов с произвольным количеством компонент связности, в то время как для рода два и выше у такого пространства не может быть более трех компонент связности (теорема 1.2 в \cite {Bo}). В настоящей работе мы рассматриваем пространства вещественно-нормированных мероморфных дифференциалов с единственным полюсом порядка~$2$, которые мы обозначаем $\mathcal{R}_g$; они являются подпространствами в пространстве $\mathcal {H}(1^{2g},-2)$.

 \subsection{Вещественно-нормированные дифференциалы}
Интеграл мероморфного дифференциала по замкнутой кривой на римановой поверхности называется \emph{периодом} этого дифференциала.

\begin{df}
Мероморфный дифференциал на римановой поверхности называется \emph{вещественно-нормированным}, если все его периоды вещественны.
\end{df}

В частности, вычеты вещественно-нормированного дифференциала в каждом из его полюсов являются чисто мнимыми.

\begin{df}
\emph{Главная часть} мероморфного дифференциала в точке  $P$ на римановой поверхности $X$ это класс эквивалентности мероморфных дифференциалов $\psi$, определенных в окрестности точки $P$, относительно
следующего отношения эквивалентности: $\psi\sim \psi'$ тогда и только тогда, когда $\psi'-\psi$ голоморфен в точке $P$.
\end{df}

\begin{Prop}{\rm\cite{GK}}
Пусть $(X;x_1,\dots,x_n;p_1,\dots,p_n)$
--- риманова поверхность с~$n>0$ попарно различными отмеченными точками $x_1,\dots,x_n$, в каждой из которых задана
главная часть $p_1,\dots,p_n$ мероморфного дифференциала, причем вычеты всех этих главных частей чисто мнимы, а их сумма равна~$0$.
Тогда на~$X$ существует единственный вещественно-нор\-ми\-ро\-ван\-ный мероморфный дифференциал, имеющий главные части~$p_i$ в своих полюсах~$x_i$.
\end{Prop}

В частности, на всякой римановой поверхности~$X$ существует единственный вещественно-нор\-ми\-ро\-ван\-ный дифференциал с полюсом $2$-го порядка  в произвольной заданной точке $x_1\in X$ с заданной главной частью~$p_1$, имеющей нулевой вычет.

\subsection{Изопериодическое слоение  на пространстве вещественно-нормированных дифференциалов $\mathcal{R}_g$}

Множество периодов данного мероморфного дифференциала на римановой поверхности образует группу по сложению. Эта группа является подгруппой в группе комплексных чисел по сложению. В свою очередь, группа периодов вещественно-нормированного мероморфного дифференциала является подгруппой в группе вещественных чисел по сложению.

Голоморфный дифференциал на римановой поверхности называется \emph{максимально несоизмеримым}, если из того, что его интеграл по некоторому циклу равен~$0$, вытекает, что этот цикл гомологичен нулю на поверхности. Группа периодов такого голоморфного дифференциала естественно отождествляется с группой целочисленных гомологий $H_1(X;\Z)$. Форма пересечений на гомологиях задает невырожденную симплектическую форму на группе периодов дифференциала.

Подгруппа  в $\mathbb{C}$, изоморфная целочисленной решетке, с заданной на ней невырожденной симплектической формой,
называется \emph{поляризованным модулем}.

Таким же образом мы можем ввести структуру поляризованного модуля, обозначим его $Per(X,\psi)\subset\mathbb{R}$, на группе периодов макси\-мально-несоизмеримого мероморфного вещественно-нормированного дифференциала с единственным полюсом второго порядка.
%Все дифференциалы отвечающие одной точке пространства модулей имеют одинаковый модуль периодов.
Будем говорить, что точка рассматриваемого пространства модулей дифференциалов, имеющая представитель $(X,\psi)$, также имеет данный модуль периодов $Per(X,\psi)$.

Пусть $L$ --- поляризованный модуль ранга~$2g$; через $\mathcal{A}_L$ будем обозначать
подпространство в пространстве модулей вещественно-нормированных дифференциалов с единственным полюсом второго порядка, состоящее из точек с модулем периодов $L$: $$\mathcal{A}_L=\{(X,\psi): Per(X,\psi)\cong L \mbox{ как поляризованный модуль}\}.$$

В недавней работе~\cite{iso3} Кальсамильи и Деруана такие подпространства рассматриваются в  пространстве модулей $\mathcal{H}{(1,1;-2)}$ мероморфных дифференциалов с одним полюсом порядка два и двумя простыми нулями.

Листами \emph{изопериодичного слоения} являются компоненты связности пространств $\mathcal{A}_L$.
Стратификация кратностями нулей на пространстве мероморфных дифференциалов индуцирует стратификацию на $\mathcal{A}_L$.  Страт  $\mathcal{A}_L\cap \mathcal{H}(n_1,\dots,n_r)$ будем обозначать через $\mathcal{A}_L(n_1,\dots,n_r)$.

\subsection{Основная теорема}

В этой работе мы рассматриваем подпространства $\mathcal{A}_L$ в пространствах $\mathcal{R}_g$ ве\-щест\-вен\-но-нормированных мероморфных дифференциалов с единственным полюсом порядка два  на кривых рода~$g$
в случаях, когда все периоды несоизмеримы,
то есть когда поляризованный
модуль периодов  вещественно-нормированного дифференциала $L$ представляет собой решетку ранга $2g$, и ее гомоморфизм в $\mathbb{R}$ является вложением. %кажется уточнять про вложение не нужно

Слои изопериодческого слоения  в стратах старшей размерности в пространстве вещественно-нор\-ми\-ро\-ванных дифференциалов с единственным полюсом второго порядка
  изучались в~\cite{LK}. В частности, там доказано, что любые две точки страта коразмерности~$0$, имеющие одну и ту же группу
  периодов ранга $2g$, но такие, что  эти  группы по-разному поляризованы, нельзя соединить непрерывным путем вдоль которого группа периодов не меняется. Мы доказываем,
  что, точки пространства модулей с одинаково поляризованными группами периодов лежат в одном слое,
   давая, тем самым, полное описание компонент линейной связности изопериодических слоев в стратах старшей размерности.

\begin{teor}
Если модуль периодов~$L$ максимально несоизмерим, то страт $\mathcal{A}_L(1^{2g})$ в пространстве $A_L$ вещественно-нормированных мероморфных дифференциалов с единственным полюсом второго порядка
и данным поляризованным модулем периодов~$L$ на кривых рода~$g$
имеет ровно одну компоненту линейной связности.

\label{teorema}
\end{teor}

\subsection{План статьи}
Работа имеет следующую структуру:

В разделе~\ref{s2} мы кратко описываем комбинаторную модель пространства модулей вещественно-нормированных дифференциалов, предложенную в работе И.~Кри\-чевера,  C.~Ландо, А.~Скрипченко \cite{LK}.
Эта модель служит в дальнейшем нашим основным инструментом.
Раздел~\ref{s3} посвящен доказательству  основной теоремы.

\section{Комбинаторная модель пространства модулей}\label{s2}

\subsection{Стратификация пространства модулей вещественно-нормированных дифференциалов}
Пусть на комплексной кривой $X$ рода $g$ задан вещественно-нормированный дифференциал $\psi$ c единственным полюсом порядка два.  Каждой точке~$q\in X$, не являющейся полюсом или нулем дифференциала $\psi$, сопоставим вещественную прямую в касательной плоскости $T_q X$ к кривой $X$ в этой точке --- росток касательной к кривой, заданной уравнением $Re(\int_q^z\psi)=0$. Ориентируем прямую в направлении, в котором мнимая часть интеграла $\int_q^z\psi$ увеличивается. Заданные таким образом прямые  образуют поле направлений на $X$, определенное везде, кроме конечного числа точек --- нулей и полюса $1$-формы $\psi$.
Обозначим это поле направлений через $V_\psi$. Его можно считать полем направлений
с особенностями, заданным на всей кривой~$X$.

Пусть точка $\mathcal{O}$ --- нуль дифференциала $\psi$ порядка $k$. Тогда в малой окрестности этой точки $U(\mathcal{O})$ существует локальная координата $z$, такая, что дифференциал, ограниченный на эту окрестность, принимает вид $\psi(z)=z^kdz$. Это означает, что в точке $\mathcal{O}$ имеется $2k+2$ направлений, таких, что $Re(\int_{\mathcal{O}}^z z^kdz)=0$; они соответствуют корням из $i,-i$ степени $k+1$. Интегральные кривые, отвечающие направлениям, соответствующим корням из $-i$, являются входящими в~$\mathcal{O}$, а соответствующие корням из $i$
--- исходящими из~$\mathcal{O}$.

Интегральные кривые поля направлений $V_\psi$,
входящие в нули дифференциала $\psi$  (или выходящие из них), называются его \emph{сепаратрисами}.   В этом разделе мы предполагаем, что все нули дифференциала $\psi$ простые. Предположим также, что
никакие два нуля не соединены сепаратрисой. Пример возможного расположения сепаратрис на кривой рода $1$ показан на рисунке \ref{ex1}.\\
\begin{figure}[H]
    \centering
    \includegraphics[scale=0.6]{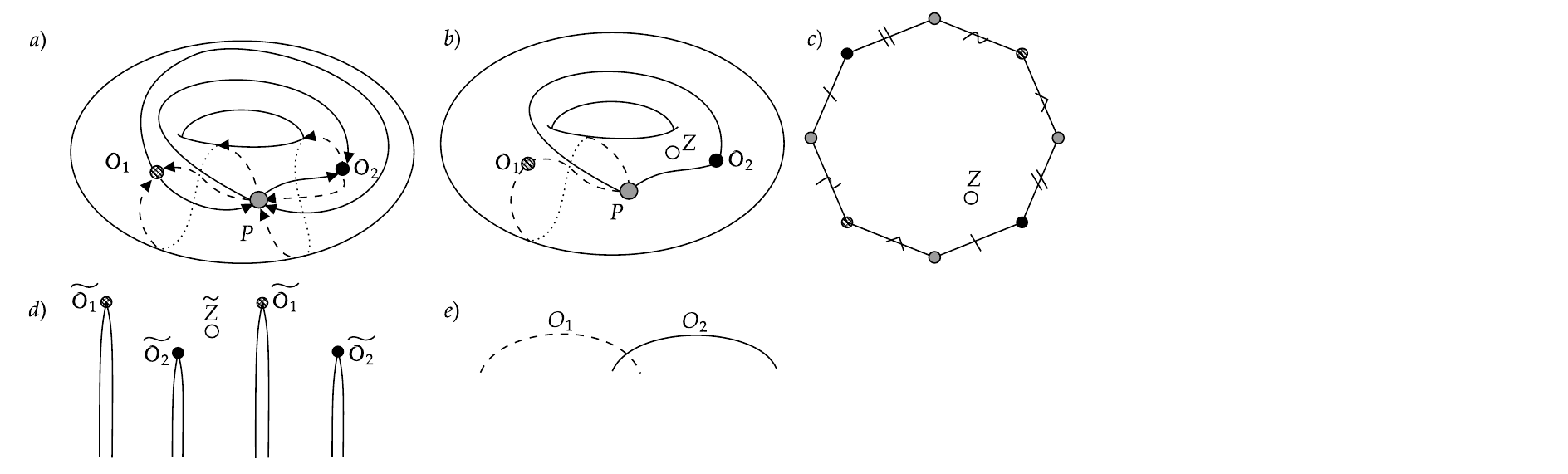}
    \caption{Пример: a) сепаратрисы  на поверхности рода~$1$; b) граф сепаратрис; c) $8$-угольник, являющийся результатом разрезания поверхности по графу сепаратрис;
    d) система разрезов комплексной прямой;
    e) дуговая диаграмма, отвечающая этой системе разрезов}
    \label{ex1}
\end{figure}
\subsubsection{Комбинаторное описание}
Сопоставим вещественно-нормированному дифференциалу $\psi$ на кривой $X$ рода $g$ граф $G_{\psi}$ на $X$. Вершинами этого графа являются $2g$ простых нулей и полюс дифференциала $\psi$, а ребрами --- все входящие в нули сепаратрисы. Поскольку все нули простые, в каждый из них входят две сепаратрисы.

 Никакая из сепаратрис не может иметь началом и концом один и тот же нуль.  По нашему предположению также никакие два нуля
 не соединены сепаратрисой. Кроме того, предельной точкой сепаратрисы при движении по ней в направлении, противоположном
 направлению поля, может быть только полюс $1$-формы~$\psi$.
 Действительно, для любой точки $A\in X$
 функция $I_{\psi,A}(q)={\rm Im}\int_A^q\psi$ 
 является однозначно 
 определенной гармонической функцией на поверхности~$X$.
 Поэтому у поля направлений $V_\psi$ нет предельных циклов:
 при движении вдоль предельного цикла функция $I_{\psi,A}$ 
 должна была бы монотонно возрастать.
 Таким образом, предельными точками траекторий поля $V_\psi$
 могут быть только особые точки этого поля, а значит, предельной
 точкой сепаратрисы нуля при движении по ней в направлении, противоположном
 направлению поля, является полюс $1$-формы~$\psi$. 
 Значит все $4g$ ребер соединяют нули с полюсом.
 Каждая пара сепаратрис, входящих в один нуль~$Q$, образует нестягиваемую петлю
 с началом и концом в точке~$Q$, представляющую нетривиальный гомотопический класс.

 Поскольку вложенный в~$X$ граф  $G_{\psi}$ имеет $4g$ ребер и $2g+1$ вершин, по формуле Эйлера он не разбивает поверхность. Результатом
 разрезания поверхности~$X$ по графу является $8g$-угольник,  каждая вторая вершина которого соответствует полюсу дифференциала~$\psi$.

Определим отображение $F_\psi$ поверхности~$X$ с выкинутыми из нее ребрами графа~$G_\psi$ в комплексную прямую с  $4g$ разрезами следующим образом: отметим точку~$Z$ на поверхности $X$, отличную от вершин вложенного графа $G_{\psi}$ и не лежащую ни на одном из его ребер.
Положим $F_\psi:q\mapsto\int_Z^q\psi$,
где интегрирование идет по произвольному пути, не пересекающему
ребер графа~$G_\psi$.

Отображение~$F_\psi$ имеет два различных предела при подходе к точкам ребер графа $G_\psi$, в зависимости от того, с какой стороны мы подходим к сепаратрисе. Совокупность этих пределов образует $4g$ разрезов комплексной прямой вдоль вертикальных лучей, направленных вниз. Обозначим полученную систему разрезов $S_{\psi}$.

Эти разрезы разбиты на пары, образованные сепаратрисами, входящими в один и тот же нуль дифференциала.

Заметим также, что разность координат двух образов нуля дифференциала~$\psi$, --- вершин парных разрезов --- равна значению интеграла от $\psi$ по некоторому циклу с началом и концом в этом нуле. Поскольку дифференциал $\psi$  вещественно-нормированный, образы одного нуля оказываются на одном горизонтальном уровне.

На рис.~\ref{ex1} изображен многоугольник, результатом склейки соответственных сторон
которого является поверхность рода один, и соответствующий набор разрезов комплексной прямой. Образ точки $Z$  на комплексной прямой  обозначен $\tilde{Z}$; вершины, соответствующие образу одного и того же нуля $\mathcal{O}_i$, обозначены одинаковыми символами $\widetilde{\mathcal{O}_i}$.

Заметим, что выбрав в качестве начальной любую другую точку $Z'$ поверхности~$X\setminus G_\psi$ вместо $Z$, мы получим такой же набор разрезов комплексной прямой, но сдвинутый на комплексный вектор, равный значению $\int_Z^{Z'}\psi$, где интегрирование идет по произвольному пути, не пересекающему ребер графа~$G_\psi$. Поэтому система вертикальных разрезов на~$\mathbb{C}$, отвечающих дифференциалу~$\psi$, определяется
однозначно с точностью до сдвига на комплексное число.

\emph{Дуговой диаграммой} называется горизонтальная прямая на плоскости вместе с четным числом точек на ней, разбитых на пары;
на рисунках точки одной пары мы соединяем полуокружностями в верхней полуплоскости с концами в этих точках.

Набору $S_{\psi}$ разрезов комплексной прямой, полученному при разрезании поверхности $X$,  можно сопоставить следующую дуговую диаграмму $\mathcal{D}_{\psi}$. Проведем в~${\mathbb C}$ горизонтальную прямую, лежащую ниже начальных точек всех разрезов. Каждому разрезу сопоставим его точку пересечения с прямой, получим  $4g$ точек. Разбиение разрезов на пары определяет
разбиение на пары этих точек. Результатом является $2g$ дуг, образующих \emph{дуговую диаграмму} $\mathcal{D}_{\psi}$ вещественно-нормированного дифференциала~$\psi$.

Наоборот, набор $S$ не перекрывающихся разрезов комплексной прямой вдоль вертикальных лучей, направленных вниз, разбитых на пары, начинающиеся на одной высоте, определяет риманову поверхность с мероморфным вещественно-нормированным дифференциалом на ней, имеющим простые нули: поверхность является  результатом склейки  сторон каждого разреза с противоположными сторонами парного к нему разреза, а дифференциал имеет вид $dz$, где $z$ --- координата на комплексной прямой. Два таких набора разрезов определяют одну и ту же точку (класс эквивалентности римановых поверхностей с вещественно-нормированным дифференциалом) пространства модулей в том и только в том случае, если один из них получается из другого сдвигом на комплексное число. Если мы хотим, чтобы  это был вещественно-нормированный мероморфный дифференциал на кривой рода~$g$ с единственным полюсом порядка~$2$, то количество разрезов в $S$ должно равняться~$4g$ и должно выполняться описанное ниже простое комбинаторное условие, гарантирующее наличие единственного полюса.

Проще всего сформулировать это условие в терминах дуговой диаграммы $\mathcal{D}$, построенной по $S$ так, как это было описано выше.
Пусть дуги диаграммы занумерованы в порядке возрастания вещественной части их левых концов. Определим матрицу пересечений $I(\mathcal{D})$ дуговой диаграммы $\mathcal{D}$ как квадратную матрицу размером $2g\times 2g$,
элементами которой являются числа $-1,0,1$, причем на пересечении $i$-й строки и $j$-го столбца стоит~$0$, если дуги с номерами $i$ и $j$ не пересекаются (или если $i=j$), $-1$, если дуги пересекаются и $i>j$, и $1$, если дуги пересекаются и $i<j$. Построенная матрица пересечений кососимметрична. Как хорошо известно (см., например, \cite{sob}), поверхность, полученная склейкой плоскости по  системе разрезов $S$, будет иметь одну точку на бесконечности (один полюс мероморфного дифференциала) в том и только в том случае, когда определитель матрицы пересечения нечетен.  Заметим, что четность определителя не меняется при замене нумерации дуг.

\begin{comment}
\begin{figure}[H]
    \centering
    \includegraphics[scale=0.6]{}
    \caption{перестановка вокруг полюса в роде 1}
    \label{perm}
\end{figure}
\end{comment}

Дуговые диаграммы, удовлетворяющие этому свойству, называются \emph{допустимыми}.

\emph{Сдвигом} называется преобразование дуговой диаграммы, при котором одна из дуг с концами в точках $x_1,x_2$ заменяется на новую, c концами в точках $x_1+\varepsilon,x_2+\varepsilon$, где $\varepsilon$ достаточно маленькое комплексное число. ``Достаточно маленькое'' означает
здесь, что порядок концов всех дуг на полученной диаграмме сохраняется. Сдвиг оставляет диаграмму допустимой.

Путь, соединяющий две точки пространства модулей в одном листе
изопериодического слоения, может проходить через точки,
отвечающие дифференциалам, поле направлений которых имеет одну сепаратрису, соединяющую
два различных нуля. При прохождении через такую точку
комбинаторика дуговой диаграммы дифференциала меняется.
Это преобразование известно из теории инвариантов узлов
конечного порядка, в которой оно называется \emph{второе движение Васильева}.  Оно состоит в замене одной из дуг в паре дуг, имеющих  соседние концы, обозначим их $O_i,O_j$, результатом протягивания ее конца вдоль второй дуги, как это показано на  рис. \ref{vm}: сплошная дуга сохраняется, тогда как дуга, 
изображенная пунктиром, заменяется. При этом движении допустимая
дуговая диаграмма переходит в допустимую.
Поскольку в дальнейшем мы будем пользоваться только вторым движением Васильева (в теории инвариантов
узлов конечного порядка используется также первое движение Васильева, состоящее
в обмене двух соседних концов дуг), мы будем называть его просто движением Васильева.

Далее в работе мы используем занумерованные дуговые диаграммы.
При сдвигах нумерация дуг сохраняется. При движении Васильева она ведет себя, как показано на Рис.~\ref{vm}.

\begin{figure}[H]

    \centering
    \includegraphics[scale=0.6]{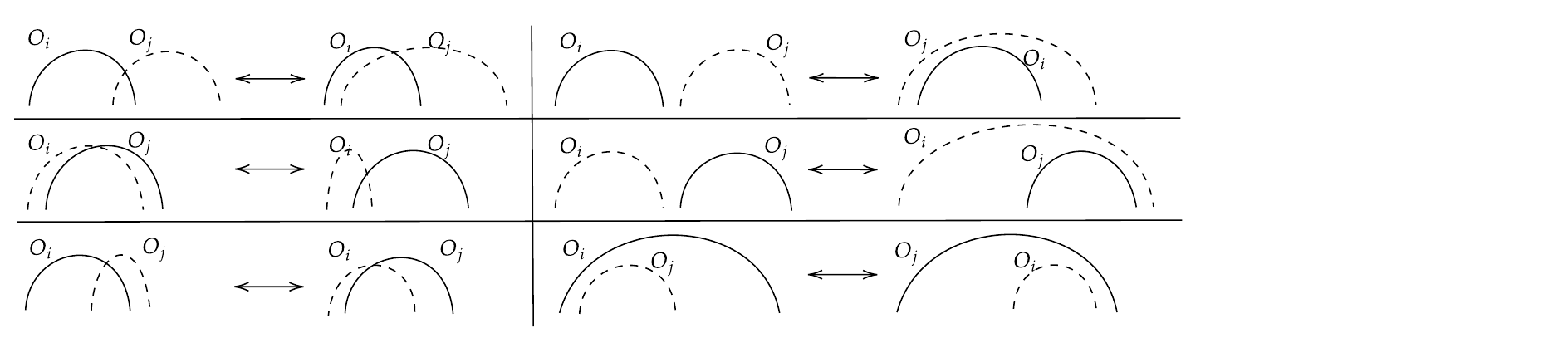}
    \caption{Различные варианты второго движения Васильева}
    \label{vm}
\end{figure}
Таким образом, по дуговой диаграмме с занумерованными дугами мы определяем нумерацию на диаграмме, полученной из данной
произвольной последовательностью движений Васильева.

 Пара точек пространства $\mathcal{R}_g(1^{2g})$, дуговые диаграммы которых  могут быть получены друг из друга сдвигами, соединены непрерывным путем в слое изопериодического слоения.

\subsubsection{Матрица периодов и матрица движений Васильева}

Разность значений интеграла $1$-формы $\psi$ от начальной точки~$Z$ до нуля $\mathcal{O}_i$ этой $1$-формы по двум негомотопным между собой путям в поверхности $X$, не пересекающим ребра графа~$G_\psi$,
обозначим через  $l_i$;  по определению, это длина дуги~$O_i$ дуговой диаграммы дифференциала~$\psi$.

Пусть $\mathcal{D}$ --- дуговая диаграмма с произвольной нумерацией дуг для вещественно-нормированного дифференциала~$\psi$ на кривой~$X$ рода~$g$, имеющего максимально несоизмеримые периоды.  Длины $l_1,\dots,l_{2g}$ ее произвольно занумерованных дуг порождают подгруппу $L\cong \mathbb{Z}^{2g}$ периодов $1$-формы $\psi$
 в~${\mathbb R}$. Любой другой занумерованной дуговой диаграмме $\tilde{\mathcal{D}}$, такой, что соответствующий дифференциал имеет максимально-несоизмеримые периоды  $\tilde{l}_1,\dots,\tilde{l}_{2g}$, порождающие тот же модуль периодов $\langle \tilde{l}_1,\dots,\tilde{l}_{2g}\rangle = L$,  можно однозначно сопоставить целочисленную матрицу размера $2g\times 2g$,  задающую замену базиса $\{l_i\}$ на $\{\tilde{l}_i\}$ в модуле $L$. Такую матрицу будем называть \emph{матрицей периодов занумерованной дуговой диаграммы} $\tilde{\mathcal{D}}$ \emph{относительно  занумерованной диаграммы} $\mathcal{D}$.

Задав нумерацию на  дуговой диаграмме $\mathcal{D}_{\psi}$, мы
по всякой последовательности движений Васильева  восстанавливаем нумерацию
дуг на новой диаграмме ${\mathcal{D}}_{\tilde{\psi}}$.
Матрицу, задающую преобразование на элементах базиса, соответствующую движению Bасильева,
будем называть \emph{матрицей этого движения}.
Сдвиги сохраняют матрицу периодов дуговой диаграммы. Поэтому для диаграммы, полученной из данной
последовательностью сдвигов и движений Васильева, матрицу периодов можно представить как композиицию
умножений исходной матрицы периодов на  матрицы отдельных движений.

Заметим, что матрица любого движения Васильева действует на матрице периодов элементарным преобразованием,
следовательно не меняет ее определитель.

\textbf{g-караваны.}
Дуговая диаграмма $\mathcal{D}_{\psi}$, состоящая из~$g$ пар
пересекающихся дуг,
таких, что дуги различных пар между собой не пересекаются и никакая пара не располагается на интервале, ограниченном концами третьей дуги, называется $g$-\emph{караваном}. Такие дуговые диаграммы являются допустимыми.

Если набор длин дуг $g$-каравана $l_1,\dots,l_{2g}$ максимально несоизмерим, то такой $g$-караван соответствует множеству дифференциалов с максимально несоизмеримым модулем периодов $\langle l_1,\dots,l_{2g}\rangle\cong L$.  Cимплектическая форма $\Omega$ на $L$ в выбранном упорядоченном базисе (длины произвольно занумерованных дуг $\mathcal{D}_{\psi}$)
  задается матрицей пересечений $I(\mathcal{D_{\psi}})$, примеры для случая~$2$-караванов показаны на рис.~\ref{ex2}. Длины дуг на рисунке обозначены буквами $x,y,z,t$. Заметим, что верхний и нижний~$g$-караваны соответствуют одной группе периодов, но различным ее поляризациям.

\begin{figure}[H]
    \centering
    \includegraphics[scale=1]{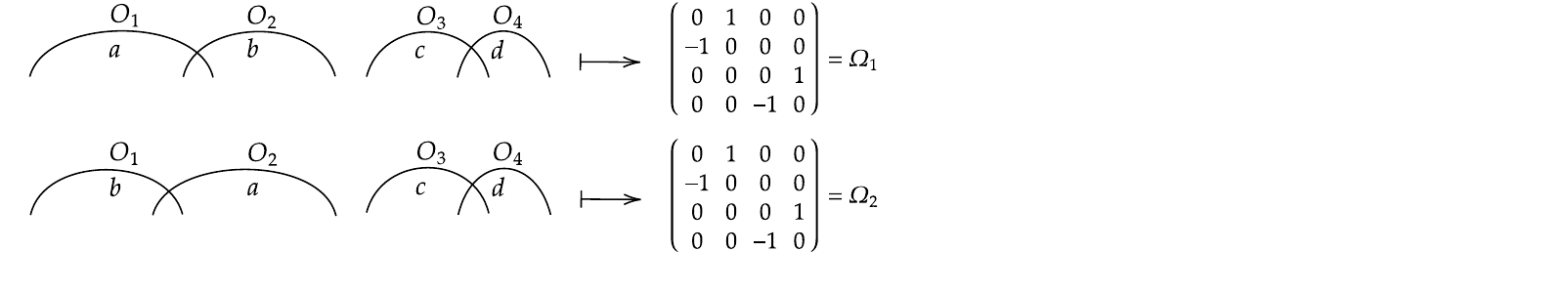}
    \caption{ $2$-караваны и соответствующие им матрицы форм пересечений }
    \label{ex2}
\end{figure}

Следующее утверждение доказано в работе~\cite{LK} (предложение 10).
\begin{Prop}
    \label{car}
Посредством сдвигов и движений Васильева любую дуговую диаграмму можно преобразовать в $g$-караван.
\end{Prop}

Каждому $g$-каравану $\mathcal{D}_{\psi}$ сопоставим вектор длины~$2g$, элементами которого являются длины дуг диаграммы $D_{\psi}$, перечисленные в порядке расположения левых концов дуг на диаграмме слева направо. Этот вектор назовем \emph{вектором периодов} дифференциала $\psi$, обозначим его $P_{\psi}$.

\begin{Prop}
\label{incl}
Для любых двух точек  в пространстве $\mathcal{R}_g(1^{2g})$, таких, что их дуговые диаграммы $\mathcal{D}_1,\mathcal{D}_2$ имеют вид $g$-караванов с векторами периодов $P_1,P_2$ и $\mathcal{D}_2$ получается  из $\mathcal{D}_1$ какой-то последовательностью сдвигов дуг и  движений Васильева, существует матрица $M\in Sp_{2g}(\mathbb{Z})$, такая, что $P_1=MP_2$.
\end{Prop}
\begin{proof}
 Занумеруем дуги в $\mathcal{D}_1$ и в $\mathcal{D}_2$ в порядке расположения левых концов дуг слева направо. В качестве
 искомой матрицы $M$ можно выбрать произведение двух матриц --- матрицы последовательности движений Васильева
 (записанной в соответствии с выбранной нумерацией дуг) и матрицы перестановки,
 осуществляющей переход от нумерации, индуцированной последовательностью движений Васильева,
 к стандартной.
 Матрица пересечений $I(\mathcal{D}_1)$ определяет симплектическую форму на группе периодов в базисе~$P_1$. Поскольку матрица пересечений $I(\mathcal{D}_2)$ имеет такой же вид в базисе~$P_2$,
 матрица, отвечающая замене базиса в группе периодов, является симплектической.
\end{proof}

Предложения \ref{car} и \ref{incl} показывают, в частности,  что если точки в $\mathcal{R}_g{(1^{2g})}$ имеют дуговые диаграммы, которые могут быть получены друг из друга при помощи сдвигов и движений Васильева, то они принадлежат одному подпространству $\mathcal{A}_L(1^{2g})$.

\section{Доказательство основной теоремы}\label{s3}

\subsection{Род 1}

Докажем теорему~\ref{teorema} для случая поверхности $X$ рода $1$. Все допустимые дуговые диаграммы в этом случае являются $1$-караванами. Зафиксируем нумерацию $1$-каравана, где левая дуга имеет номер $1$.

Введем следующее отношение эквивалентности~$\sim$ на упорядоченных парах вещественных чисел:~$(x,y)\sim (-y,x)$. Тогда в каждом ненулевом классе эквивалентности $4$ элемента:

\begin{equation}
\label{relation}
    (x,y)\sim (-y,x)\sim(y,-x)\sim(-x,-y).
\end{equation}
Заметим, что в каждом классе эквивалентности $[(x,y)]$ с ненулевыми $x$, $y$ есть ровно один \emph{положительный элемент} (пара, где оба числа положительные).

 Назовем~$1$-караван   $\mathcal{D}_{\psi}$ \emph{ассоциированным} классу~$[(x,y)]$, если его вектор периодов $P_{\psi}$ задан положительным элементом класса эквивалентности $[(x,y)]$.
Все~$1$-караваны, ассоциированные  одному и тому же классу, могут быть получены друг из друга сдвигами дуг, без применений движений Васильева

\begin{Prop}
\label{plus}
Если~$x$ и~$y$ несоизмеримы, то
из~$1$-каравана, ассоциированного классу $[(x,y)]$, можно сдвигами и движениями Васильева получить~$1$-караван, ассоциированный классу  $[(x+y,y)]$.
\end{Prop}
\begin{proof}
Рассмотрим два случая:\begin{itemize}
    \item $x$ и $y$ имеют одинаковый знак, значит $(x,y)\sim (|x|,|y|)$. Движение Васильева, примененное к правому концу левой дуги, дает пару  $(|x|+|y|,|y|)\sim(x+y,y)$.

    \item $x$ и $y$ разного знака, значит $(x,y)\sim (|y|,|x|)$. Сдвинем правую дугу влево и применим к ней движение Васильева. В случае $|x|>|y|$, мы получаем $(|y|,|x|-|y|)=(|y|,|x+y|)\sim (x+y, y)$.  Если $|y|>|x|$, получаем $(|y|-|x|,|y|)=(|y+x|,|y|)\sim (x+y,y)$.
\end{itemize}
\end{proof}
\begin{Prop}
\label{plus2}
Для любых несоизмеримых~$x$ и~$y$
из  дуговой диаграммы, ассоциированной классу $[(x,y)]$, можно движениями Васильева получить диаграммы, ассоциированные  классам  $[(x\pm y,y)]$  и  $[(x,y \pm x)]$.
\end{Prop}
\begin{proof}
Требуется получить диаграммы, соответствующие  четырем классам:
\begin{itemize}
     \item $[(x+y,y)]$. Уже рассмотрен в предложении \ref{plus}.
    \item $[(x-y,y)]$. По предложению \ref{plus} из такой пары можно получить $1$-караван, ассоциированный классу $[(x-y+y,y)]$, который равен классу $[(x,y)]$. Значит, обратными движениями из $1$-каравана, ассоциированного классу $[(x,y)]$, можно получить $1$-караван, ассоциированный классу $[(x-y,y)]$.

    \item   $[(x,y\pm x)]$. Класс $[(x,y)]$ содержит элемент $(y,-x)$, значит по предыдущему пункту можно получить $1$-караван, ассоциированный классу $[(y\pm x, -x)]$, который равен классу $[(x, y\pm x)]$.

\end{itemize}
\end{proof}

\begin{lemma}\label{2}
Подпространство  $\mathcal{A}_L(1,1)$ в $\mathcal{R}_g(1,1)$ линейно связно для каждого поляризованного модуля периодов~$L$ ранга~$2$.
\end{lemma}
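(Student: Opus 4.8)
План состоит в том, чтобы свести утверждение к комбинаторике дуговых диаграмм: я хочу показать, что дуговые диаграммы любых двух точек подпространства $\mathcal{A}_L(1,1)$ переводятся друг в друга конечной последовательностью сдвигов и движений Васильева. По обсуждению в разделе~\ref{s2}, где сдвиги и движения Васильева отвечают непрерывным путям внутри слоя изопериодического слоения (а по предложениям~\ref{car} и~\ref{incl} сохраняют поляризованный модуль периодов), этого достаточно, чтобы обе точки лежали в одной компоненте линейной связности. Сначала я воспользуюсь тем, что в роде~$1$ всякая допустимая дуговая диаграмма является $1$-караваном, ассоциированным некоторому классу $[(x,y)]$, положительный элемент которого служит вектором периодов. Так как модуль~$L$ максимально несоизмерим, пара $(x,y)$ образует базис решётки~$L$; в частности, $x$ и~$y$ несоизмеримы. Поскольку $1$-караваны, ассоциированные одному и тому же классу, получаются друг из друга сдвигами, остаётся рассмотреть два $1$-каравана, ассоциированных классам $[(x_1,y_1)]$ и $[(x_2,y_2)]$.

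Далее я замечу, что оба $1$-каравана имеют одну и ту же матрицу пересечений --- стандартную симплектическую форму --- в своих базисах периодов. Поэтому равенство поляризованных модулей~$L$ означает, что замена базиса~$M$, переводящая $(x_1,y_1)$ в $(x_2,y_2)$, сохраняет эту форму. В ранге~$2$ имеем $Sp_2(\mathbb{Z})=SL_2(\mathbb{Z})$, так что $M\in SL_2(\mathbb{Z})$ и $(x_2,y_2)^{\top}=M\,(x_1,y_1)^{\top}$.

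Основной шаг --- реализация матрицы~$M$ движениями Васильева. По предложению~\ref{plus2} из диаграммы класса $[(x,y)]$ движениями Васильева получаются диаграммы классов $[(x\pm y,y)]$ и $[(x,y\pm x)]$; это в точности действие на векторе периодов элементарных матриц $\left(\begin{smallmatrix}1&\pm1\\0&1\end{smallmatrix}\right)$ и $\left(\begin{smallmatrix}1&0\\ \pm1&1\end{smallmatrix}\right)$. Эти элементарные матрицы порождают всю группу $SL_2(\mathbb{Z})$, так что $M$ раскладывается в их произведение, и я реализую это разложение соответствующей последовательностью движений Васильева. При этом предложение~\ref{plus2} применимо на каждом шаге: получаемая на промежуточном шаге пара снова является базисом~$L$, а значит её компоненты несоизмеримы.

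Тем самым $1$-караван класса $[(x_1,y_1)]$ переводится в $1$-караван класса $[(x_2,y_2)]$ сдвигами и движениями Васильева, откуда следует, что обе точки лежат в одной компоненте линейной связности $\mathcal{A}_L(1,1)$. Главной трудностью я считаю именно основной шаг: наблюдение, что реализуемые движениями Васильева элементарные преобразования порождают $SL_2(\mathbb{Z})$ (классический факт), вместе с контролем несоизмеримости на всех промежуточных шагах.
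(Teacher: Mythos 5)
Ваше доказательство верно и по существу совпадает с доказательством в статье: оба сводят утверждение к реализации элементарных порождающих группы $Sp_2(\mathbb{Z})=SL_2(\mathbb{Z})$ движениями Васильева через предложение~\ref{plus2} и к тому факту, что эти матрицы порождают всю группу. Единственное отличие --- вы явно проговариваете сохранение несоизмеримости на промежуточных шагах (промежуточные пары остаются базисами решётки~$L$), что в статье оставлено неявным, но это лишь уточнение, а не иной путь.
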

\begin{proof}

Предложение~\ref{plus2} показывает, что из диаграммы, ассоциированной классу $[(x,y)]$, можно,
двигаясь по непрерывному пути в $\mathcal{A}_L(1,1)$, получить диаграммы, ассоциированные классам $[(x,y)\begin{psmallmatrix}
1 & \pm 1 \\
0 & 1
\end{psmallmatrix}]$ и $[(x,y)\begin{psmallmatrix}
1 & 0 \\
\pm 1 & 1
\end{psmallmatrix}]$.

Следовательно, мы можем получить из диаграммы, ассоциированной классу $[(x,y)]$,
диаграмму, ассоциированную $[(x,y)M]$, для любой $M\in  Sp_2(\mathbb{Z})$,  так как
четыре матрицы $\begin{psmallmatrix}
1 & \pm 1 \\
0 & 1
\end{psmallmatrix}
,
\begin{psmallmatrix}
1 & 0 \\
\pm 1 & 1
\end{psmallmatrix}$ порождают группу $ Sp_2(\mathbb{Z})=SL_2(\mathbb{Z})$.
Это означает, что любые два $1$-каравана $\mathcal{D}_1, \mathcal{D}_2$, построенные по дифференциалам $\psi_1,\psi_2$
с поляризованным модулем периодов $L$, могут быть преобразованы друг в друга сдвигами и движениями Васильева. Следовательно, любая пара точек в $\mathcal{A}_L(1,1)$ связана непрерывным путем. Лемма доказана.
\end{proof}

\subsection{Род 2}
\label{sec2}
Теперь докажем теорему~\ref{teorema} в случае, когда род $g$ римановой поверхности $X$ равен $2$.
В этом случае дуговая диаграмма состоит из~$2g=4$ дуг.

\begin{lemma}
\label{g2}
Подпространство  $\mathcal{A}_L(1^4)$ в $\mathcal{R}_2(1^4)$ линейно связно дла каждого поляризованного модуля периодов~$L$ ранга~$4$.
\end{lemma}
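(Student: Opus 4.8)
План состоит в том, чтобы повторить схему доказательства Леммы~\ref{2}, заменив группу $Sp_2(\mathbb{Z})=SL_2(\mathbb{Z})$ на $Sp_4(\mathbb{Z})$. По предложению~\ref{car} любую точку пространства $\mathcal{A}_L(1^4)$ можно соединить путём из сдвигов и движений Васильева с некоторым $2$-караваном, поэтому достаточно доказать, что любые два $2$-каравана с модулем периодов~$L$ соединены путём в $\mathcal{A}_L(1^4)$. По предложению~\ref{incl} их векторы периодов $P_1,P_2$ связаны соотношением $P_1=MP_2$ для некоторой матрицы $M\in Sp_4(\mathbb{Z})$. Таким образом, достаточно показать, что любую матрицу $M\in Sp_4(\mathbb{Z})$ можно реализовать как матрицу периодов, отвечающую некоторой последовательности сдвигов и движений Васильева, переводящей один $2$-караван в другой.

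$2$-караван представляет собой две пары пересекающихся дуг, причём дуги разных пар между собой не пересекаются; поэтому его матрица пересечений блочно-диагональна и состоит из двух блоков вида $\begin{psmallmatrix} 0 & 1 \\ -1 & 0 \end{psmallmatrix}$. Обозначим отвечающий им симплектический базис через $e_1,f_1,e_2,f_2$. Применяя движения Васильева и сдвиги в пределах одной пары дуг точно так, как в предложениях~\ref{plus} и~\ref{plus2}, мы реализуем все трансвекции группы $SL_2(\mathbb{Z})$, действующей на двумерном блоке $\langle e_1,f_1\rangle$, и, аналогично, на блоке $\langle e_2,f_2\rangle$. Тем самым реализуется вся подгруппа $SL_2(\mathbb{Z})\times SL_2(\mathbb{Z})\subset Sp_4(\mathbb{Z})$, сохраняющая разбиение на блоки.

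Остаётся реализовать хотя бы одну трансвекцию, связывающую два блока, например $T_{e_1-e_2}$. Для этого необходимо движение Васильева, при котором конец дуги из одной пары протягивается вдоль дуги из другой пары, создавая пересечение между парами и, следовательно, ненулевой внеблочный элемент матрицы пересечений. Я построю явную последовательность: сдвигами приведу $2$-караван к виду, в котором конец некоторой дуги первой пары оказывается соседним с концом дуги второй пары, применю к ним движение Васильева (получая пересекающиеся дуги разных пар и трансвекцию вида $T_{e_1\pm e_2}$), а затем сдвигами верну диаграмму к виду $2$-каравана. Наиболее трудоёмкая часть --- аккуратное отслеживание порядка концов дуг и знаков длин при протягивании, по образцу разбора случаев в предложении~\ref{plus}, и проверка того, что получающаяся матрица действительно является искомой связывающей трансвекцией.

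Наконец, трансвекции двух блоков вместе с одной связывающей трансвекцией $T_{e_1-e_2}$ порождают всю группу $Sp_4(\mathbb{Z})$ --- это стандартный факт о порождении симплектической группы трансвекциями вдоль связной системы векторов. Следовательно, любая матрица $M\in Sp_4(\mathbb{Z})$ реализуется некоторой последовательностью сдвигов и движений Васильева, и по предложению~\ref{incl} любые два $2$-каравана с модулем периодов~$L$ соединены непрерывным путём в $\mathcal{A}_L(1^4)$. Значит, пространство $\mathcal{A}_L(1^4)$ линейно связно. Основным препятствием я считаю именно построение связывающей трансвекции из предыдущего абзаца: нужно явно указать протягивающее движение Васильева между двумя парами дуг и вычислить его матрицу, поскольку всё остальное сводится к результатам рода~$1$ и к известному утверждению о порождении $Sp_4(\mathbb{Z})$.
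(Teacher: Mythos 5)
Ваш план по существу совпадает со схемой статьи: сведение к $2$-караванам (предложение~\ref{car}), реализация блочно-диагональной подгруппы $SL_2(\mathbb{Z})\times SL_2(\mathbb{Z})$ средствами рода~$1$ (предложения~\ref{plus}, \ref{plus2}) и добавление одного ``перемешивающего'' порождающего элемента; в статье эту роль играет матрица $C=\begin{bsmallmatrix}1 & 0&0&0\\ 0&1&0&1\\ -1&0&1&0\\ 0&0&0&1\end{bsmallmatrix}$, у вас --- связывающая трансвекция $T_{e_1-e_2}$. Однако в вашем тексте есть настоящий пробел, и вы сами его называете: построение последовательности сдвигов и движений Васильева, реализующей связывающий элемент, лишь анонсировано (``я построю явную последовательность''), но не выполнено. Именно эта конструкция и есть содержательное ядро доказательства леммы: в статье это предложение~\ref{class} вместе с явной цепочкой движений на рис.~\ref{b1}.

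Подчеркну, почему этот шаг нельзя считать рутинным. Во-первых, одно движение Васильева между дугами разных пар разрушает структуру $2$-каравана (возникает зацепление между парами), поэтому схема ``применить одно движение и сдвигами вернуть диаграмму к виду $2$-каравана'' буквально не работает: нужна целая цепочка движений, возвращающая диаграмму в класс $2$-караванов, и матрица такой естественной цепочки --- не одна трансвекция, а элемент типа $C$, меняющий сразу две координаты ($y\mapsto y+t$ и $z\mapsto z-x$); у единственной трансвекции разность с единичной матрицей имеет ранг~$1$, у $C$ --- ранг~$2$. Во-вторых, длины дуг --- положительные вещественные числа, поэтому реализация возможна только при выполнении неравенств типа $z>x$; чтобы снять это ограничение, в статье вводится отношение эквивалентности~(\ref{rel2}), проводится разбор четырёх случаев по знакам $x$ и $t$ и используется предварительное сопряжение блочными матрицами $B_1$, $A_2$, обеспечивающее нужную положительность. Ничего из этого в вашем тексте нет, хотя именно это вы справедливо называете ``наиболее трудоёмкой частью''. Наконец, ссылку на ``стандартный факт'' о порождении $Sp_4(\mathbb{Z})$ трансвекциями вдоль связной системы векторов следует либо заменить явным доказательством (в статье это предложения~\ref{ab} и~\ref{basis}), либо снабдить точной ссылкой. Итого: стратегия верна и совпадает со статейной, но ключевой технический шаг --- явная реализация перемешивающего преобразования с контролем положительности длин --- отсутствует, так что предложенный текст является планом доказательства, а не доказательством.
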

\begin{proof}
Любую допустимую дуговую диаграмму в $\mathcal{R}_2(1^4)$ можно преобразовать в $2$-караван с помощью сдвигов и движений Васильева по предложению \ref{car}. Проверим теперь, что  любые два $2$-каравана $\mathcal{D}_1,\mathcal{D}_2$, построенные по точкам в $\mathcal{A}_L(1^4)$, можно преобразовать друг в друга. 
Векторы периодов $P_1,P_2$ любых двух $2$-караванов в $\mathcal{A}_L(1^4)$ связаны симплектическим преобразованием.
Начнем с выбора набора элементов группы $Sp_4(\mathbb{Z})$,
порождающих ее.
\begin{Prop}
\label{ab}
Следующий набор элементов и обратных к ним порождает группу $Sp_4(\mathbb{Z})$: \\
\\ $A_1=\begin{bmatrix}
1& 1&0&0 \\
0 & 1&0 &0\\
0&0&1&0\\
0&0&0&1
\end{bmatrix}
\quad B_1=\begin{bmatrix}
1& 0&0&0 \\
 1 & 1&0 &0\\
0&0&1&0\\
0&0&0&1
\end{bmatrix} \quad A_2=\begin{bmatrix}
1& 0&0&0 \\
0 & 1&0 &0\\
0&0&1&1\\
0&0&0&1
\end{bmatrix}
\quad B_2=\begin{bmatrix}
1& 0&0&0 \\
0 & 1&0 &0\\
0&0&1&0\\
0&0& 1&1
\end{bmatrix}\quad C=
\begin{bmatrix}
1 & 0&0&0 \\
0 & 1 &0&1\\
-1&0&1&0\\
0&0&0&1
\end{bmatrix}\quad $
\end{Prop}

\begin{proof}
Мы доказываем это утверждение в предложении \ref{basis}  в более общем виде, для произвольного $g$.
\end{proof}
Сначала проверим, что  $2$-караван $\mathcal{D}$ с вектором периодов $P$ можно преобразовать сдвигами и движениями Васильева в $2$-караваны с векторами периодов  $CP,C^{-1}P$, если $CP>0,C^{-1}P>0$.\\
\begin{Prop}
Для  $2$-каравана $\mathcal{D}$ с вектором периодов  $P=(x,y,z,t)$ и матрицы $C=
\begin{bmatrix}
1 & 0&0&0 \\
0 & 1 &0&1\\
-1&0&1&0\\
0&0&0&1
\end{bmatrix}\quad $:
\begin{itemize}
    \item если $z>x$, то для $\mathcal{D}$ существует последовательность сдвигов и движений Васильева, переводящая его в $2$-караван с вектором периодов $P_1=CP$;
    \item  если $y>t$, то для $\mathcal{D}$ существует последовательность сдвигов и движений Васильева, переводящая его в $2$-караван с вектором периодов $P_2=C^{-1}P$.
\end{itemize}
\label{class}
\end{Prop}
\begin{proof}
Соответствующая последовательность преобразований для матрицы $C$ показана на рис.\ref{b1}. Дуга, которая заменяется при движении Васильева, на рисунке показана пунктиром. Горизонтальные стрелки указывают направление сдвига дуги также выделенной пунктиром. Последовательность для матрицы $C^{-1}$ получается обратными преобразованиями.
\end{proof}

\begin{figure}
    \centering
    \includegraphics[scale=0.25]{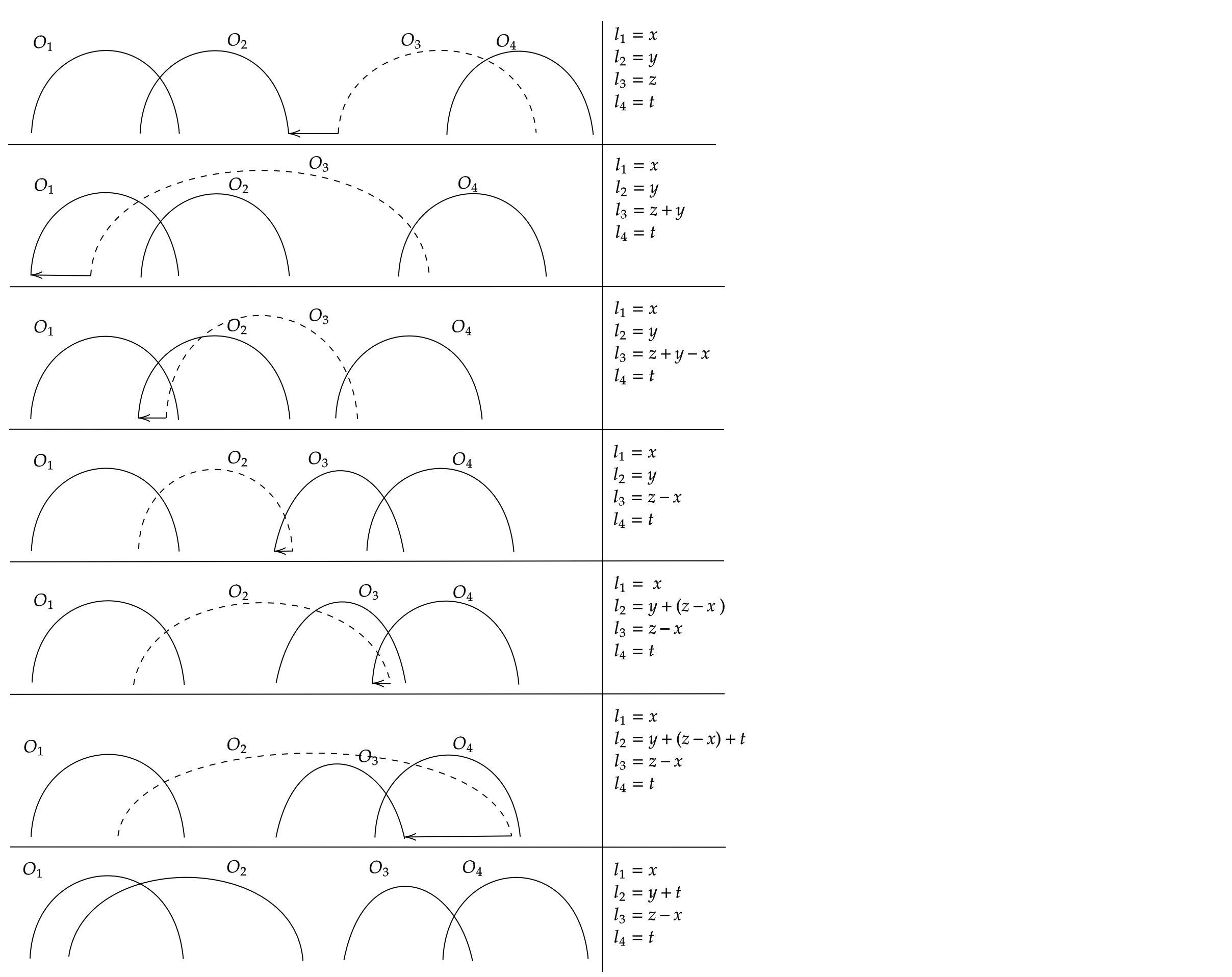}
    \caption{Последовательность движений Васильева $2$-караванов, реализующих  преобразование $C$}
    \label{b1}
\end{figure}

Отметим, что каждая из матриц $A_1$ и $B_1$ коммутирует с каждой из матриц $A_2$ и $B_2$;
в свою очередь, матрица $C$ коммутирует с матрицами $B_1$ и $A_2$.

Периоды каждого $2$-каравана с решеткой периодов~$L$ задают разложение решетки периодов~$L$ в прямую сумму двух $2$-мерных симплектических подрешеток $L=L_1\oplus L_2$, первая из которых натянута на периоды первого
$1$-каравана, вторая --- на периоды второго.

Введем отношение эквивалентности~$\sim$ на упорядоченных четверках ненулевых вещественных чисел
\begin{equation}(x,y,z,t)\sim (-y,x,z,t)\mbox{ и } (x,y,z,t)\sim (x,y,-t, z).
 \label{rel2}
\end{equation}Тогда в каждом классе эквивалентности $16$ элементов  и ровно один элемент из них \emph{положительный}, то есть все числа в нем больше нуля. $2-$караван с вектором периодов, заданным положительным элементом какого-то класса, будем называть \emph{ассоциированным этому классу эквивалентности}.

Теперь покажем, что $4$-караван c вектором периодов $P_1=(x,y,z,t)$  можно сдвигами и движениями Васильева преобразовать в $4$-караван, ассоциированный классу $[(x,y,z,t)M]$, для каждой матрицы $M\in Sp_4(\mathbb{Z})$.
Матрицы $A_i, B_i$ действуют прибавлением и вычитанием внутри упорядоченных пар $(x,y), (z,t)$. Поэтому, чтобы попасть в класс, которому принадлежит четверка чисел $[(x,y,z,t)A_i]$, достаточно реализовать последовательность движений на одной из пар зацепленных дуг, так же, как в случае рода $1$, описанном в предложении \ref{plus2}. Теперь докажем аналогичное утверждение для порождающих матриц $C, C^{-1}$.
\begin{Prop}
Из  дуговой диаграммы, ассоциированной классу $[(x,y,z,t)]$,
 можно движениями Васильева получить диаграммы, ассоциированные классам  $[(x,y,z,t)C], [(x,y,z,t)C^{-1}]$,  где $C=
\begin{bsmallmatrix}
1 & 0&0&0 \\
0 & 1 &0&1\\
-1&0&1&0\\
0&0&0&1
\end{bsmallmatrix}$.
\label{class}
\end{Prop}
\begin{proof}
Поскольку преобразование, обратное движению Васильева,  также  является
движением Васильева, достаточно доказать предложение только для класса $[(x,y,z,t)C]$.  Требуется показать, что для класса $[(x,y,z,t)]$ существуют движения Васильева, переводящие ассоциированный с ним $2$-караван в $2$-караван, ассоциированный  с классом $[(x,y+t,z-x,t)]$.
Рассмотрим четыре случая:
\begin{itemize}
    \item $x>0,t>0$:
      Следуя рассуждению выше, мы можем преобразовать такой $2$-караван, изменив  величины $y,z$  в соответствии с действием матриц $B_1, A_2$. Получим $2$-караван, ассоциированный  классу $(x,y',z',t)>0$, где $y'=y+n_1x>0$ и $z'=z+n_2t>x>0$, для подходящих $n_1,n_2$. Затем применим к получившемуся $2$-каравану набор движений, показанный на рисунке \ref{b1}.  Получим $2$-караван, ассоциированный классу $[(x,y'+t,z'-x,t)]$. Теперь применим последовательность преобразований дуговой диаграммы, соответствующую преобразованиям, заданным матрицами $B_1^{-1}, A_2^{-1}$, чтобы получить желаемую комбинацию $(x,y+t,z-x,t)$.
\item $x<0,t>0$: в этом же классе находится элемент
$ (-x,-y,z,t)$, удовлетворяющий условиям пункта 1. Значит, реализовав обратные преобразования к преобразованиям в  пункте 1, можно получить  класс $[(-x,-y-t,z-x,t)]$. Нужный нам элемент также принадлежит этому классу:
$(x,y+t,z-x,t)\sim (-x,-y-t,z-x,t)$
\item
$x>0,t<0$:
в этом же классе находится элемент
$ (x,y,-z,-t)$, удовлетворяющий условиям пункта  1. Значит, реализовав обратные преобразования к преобразованиям в  пункте 1, можно получить  класс $[(x,y+t,-z+x,-t)]$. Нужный нам элемент также принадлежит этому классу:
$(x,y+t,-z+x,-t)\sim (x,y+t,z-x,t)$
\item
$x<0,t<0$:
в этом же классе находится элемент
$ (-x,-y,-z,-t)$, удовлетворяющий условиям пункта 1. Значит, реализовав преобразования в  пункте 1, можно получить  класс $[(-x,-y-t,-z+x,-t)]$. Нужный нам элемент также принадлежит этому классу:
$(-x,-y-t,-z+x,-t)\sim (-x,y+t,z-x,t)$.\\

\end{itemize}
\end{proof}
\end{proof}
\subsection{Случай произвольного рода}
Теперь мы можем доказать Теорему~\ref{teorema} для случая произвольного рода $g$.
\begin{proof}[Доказательство Теоремы~\ref{teorema}]
Утверждение доказано для $g=1,2$ в леммах \ref{2} и \ref{g2}. В старших родах также достаточно показать, что любые два  $g$-каравана, векторы  периодов которых связаны симплектическим преобразованием, могут быть получены друг из друга с помощью движений Васильева.

Подходящий набор порождающих для группы $Sp_{2g}(\mathbb{Z})$
задается набором матриц, естественно обобщающим на произвольную размерность матрицы $A_1,A_2,B_1,B_2$, $C$ из утверждения \ref{ab}. При данном $g$ матрица $A_k$, $k=1,\dots,g$, размера $2g\times 2g$ задана как блочно-диагональная матрица с  $g$ блоками размера $2\times 2$, где все блоки, кроме $k$-го, это единичные матрицы, а блок с индексом $k$ равен $\begin{bsmallmatrix}1 & 1\\
0&1
\end{bsmallmatrix}$.

Матрицы из второго семейства, $B_k$, $k=1,\dots,g$,
определяются так же, с заменой блока с индексом~$k$
блоком $\begin{bsmallmatrix}1 & 0\\
1&1  \end{bsmallmatrix}$. Матрицы семейства~$C_k$,
$k=1,2,3,\dots,g-1,$
при фиксированном $g$ определим следующим образом: это блочно-диагональные матрицы размера $2g\times 2g$, у которых $g-2$ блоков это единичные матрицы размера $2\times 2$ и блок с индексом $k$ размера $4\times 4$ имеет вид $\begin{bsmallmatrix}
1 & 0&0&0 \\
0 & 1 &0&1\\
-1&0&1&0\\
0&0&0&1
\end{bsmallmatrix}$.

Проверим, что матрицы $A_i,B_i,C_i$ и обратные к ним порождают группу  $Sp_{2g}(\mathbb{Z})$.
\begin{Prop}
\label{basis}
Матрицы $A_1^{\pm 1},\dots,A_{g}^{\pm 1},B_1^{\pm 1},\dots,B_g^{\pm 1},C^{\pm1}_1,\dots,C^{\pm1}_{g-1}$ порождают группу $Sp_{2g}(\mathbb{Z})$.
\end{Prop}
\begin{proof}
Покажем, что из каждой cимплектической матрицы $U\in Sp_{2g}(\mathbb{Z})$, действуя слева и справа матрицами из семейств $A,B,C$, можно получить единичную матрицу.

Обозначим строки матрицы $U={(u_{i,j})}$ через $s_i$, а ее столбцы через $c_j$.
 Матрицы $A_k$ при умножении слева действуют прибавлением или вычитанием строк  внутри  упорядоченных пар строк $(s_{2k-1},s_{2k}), k\geq 1$. При умножении слева на матрицу $C_k$ строки  $(s_{2k-1},s_{2k}, s_{2k+1})$ матрицы $U$ заменяются соответственно на  $(s_{2k}+s_{2k+2}, s_{2k+1}-s_{2k-1})$.

 Используя перестановку пар, которая осуществляется композицией матриц типа $A, B$, можно добиться того, чтобы $u_{1,1}\neq 0$. 
 Поскольку определитель равен $1$, существует пара строк, такая, что сделав их первой и второй,
 мы получим $u_{1,1}\neq 0$ и если $u_{2,1}\neq 0$, то $gcd(u_{1,1},u_{2,1})\neq 0$.
 Используя матрицы типа $A$, приведем первый столбец к виду $(gcd_{1,1},0,gcd_{2,1},\dots, gcd_{g,1},0)$, где $gcd_{i,1}$ это наибольший общий делитель элементов $u_{2i-1,1}, u_{2i,1}$ (рис.\ref{1-19}). Далее, используя перестановку строк внутри пары со сменой знака (которая получается композицией матриц типа $A$)  и действие матриц типа $B$, получим для $(0,0)$ в первом столбце для нижней пары строк и $(0,\pm gcd_{(g-1,g),1})$ для пары $(s_{2(g-1)-1}, s_{2(g-1)})$. Здесь через $ gcd_{(g-1,g),1}$ обозначен наибольший общий делитель $gcd(gcd_{g-1,1},gcd_{g,1})$. Продолжив аналогичным образом, можно получить значение $(0,0)$ в первом столбце для всех пар строк, кроме первой, для которой значения будут соответственно $(\pm gcd_{(1,\dots,2g),1},0)$. Здесь $gcd_{(1,\dots,2g),1}$ обозначает $gcd(gcd_{1,1},\dots,gcd_{g,1})$ и поскольку  определитель матрицы $U$ равен $1$, имеем $(\pm gcd_{(1,\dots,2g),1},0)=(\pm 1,0)$. Мы добились того, что первый столбец приобрел вид $(\pm 1,0,\dots,0)^T$.

Матрицы $A_k,B_k$ действуют на столбцах  умножением справа. Поэтому рассмотрев  последовательность преобразований,
 аналогичную последовательности на строках, можно привести первую строку к виду $(\pm 1,0, gcd_{1,(2,\dots,g)},0\dots)$. Здесь $gcd_{1,(2,\dots,g)}=gcd(\tilde u_{1,2},\dots, \tilde u_{2g,2})$. Применив $B_1$ $gcd_{1,(2,\dots,g)}$ раз, получим матрицу, у которой первый столбец по-прежнему равен  $(\pm 1,0,\dots,0)^T$, а первая строка имеет вид  $(\pm 1,0,\dots,0)$.

Поскольку все матрицы $U, A_i,B_j$ являются элементами группы $Sp_{2g}({\mathbb Z})$, второй столбец и вторая строка матрицы, полученной в результате преобразований,  имеют вид  $(0,\pm 1,0\dots)^T,(0,\pm 1,0,\dots) $.  Используя преобразования типа $A$, можно добиться положительных значений ($1$, вместо $\pm 1$). Согласно предположению индукции, оставшиеся столбцы и строки можно также привести к диагональному виду. Получаем единичную матрицу, что и требовалось.

\begin{figure}[H]
    \centering
    \includegraphics[scale=0.45]{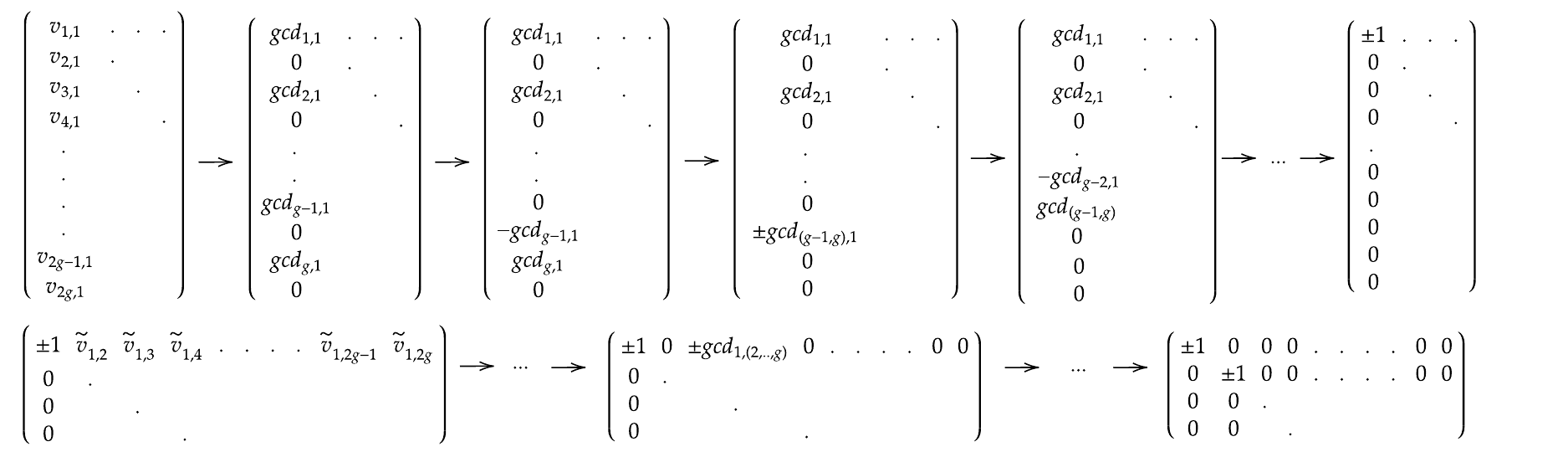}
    \caption{Порядок приведения матрицы к диагональной форме}
    \label{1-19}
\end{figure}
\end{proof}
\end{proof}

    \bibliographystyle{plain}
\bibliography{main}
\end{document}